\documentclass[a4paper,11pt,reqno, english]{amsart}

\usepackage{anysize,color}
\usepackage[utf8]{inputenc}
\usepackage[dvipsnames]{xcolor}
\usepackage[colorlinks=true,urlcolor=blue,linkcolor={YellowOrange},citecolor={YellowOrange}]{hyperref}  
\usepackage{float}
\usepackage{amsfonts,amssymb,amsmath}
\usepackage{amsthm}    
\usepackage{url}
\usepackage{paralist}
\usepackage{tikz}
    \usetikzlibrary{decorations.markings,arrows,shapes.callouts}
\usepackage{rotating} 
\usepackage[mathscr]{euscript}
\usepackage{verbatim}
\usepackage{tikz-cd}
\usepackage{mathtools}
\usepackage{dsfont}
\usepackage{amsrefs}
\usepackage{thmtools}
\usepackage{thm-restate}



\usepackage{charter}


\newtheorem{theorem}{Theorem}[section]
\newtheorem{proposition}[theorem]{Proposition}
\newtheorem{lemma}[theorem]{Lemma}

\theoremstyle{definition}
\newtheorem{definition}[theorem]{Definition}
\theoremstyle{remark}

\newtheorem{remark}[theorem]{Remark} 


\newcommand{\R}{\mathbb{R}}
\newcommand{\C}{\mathbb{C}}
\newcommand{\FF}{\mathbb{F}}

\newcommand{\Z}{\mathbb{Z}}

\newcommand{\F}{\mathcal{F}}
\newcommand{\G}{\mathcal{G}}

\def\keywords{\xdef\@thefnmark{}\@footnotetext}

\DeclareMathOperator{\proj}{\mathrm{proj}}

\DeclareMathOperator{\conv}{\mathrm{conv}}

\DeclareMathOperator{\linspan}{\mathrm{span}}

\tikzcdset{row sep/my_size/.initial=-2mm}
\tikzcdset{row sep/my_size_small/.initial=-3mm}
\newcommand{\xDownarrow}[1]{%
  {\left\Downarrow\vbox  to #1{}\right.\kern-\nulldelimiterspace}
}

\begin{document}

\title{A necessary and sufficient condition for $k$-transversals}


\author[McGinnis]{Daniel McGinnis} 
\address{Department of Mathematics, Princeton University, USA}
\email{dm7932@princeton.edu}

\author[Sadovek]{Nikola Sadovek}
\address[NS]{Max Planck Institute of Molecular Cell Biology and Genetics, Dresden, Germany \newline
Center for Systems Biology Dresden, Dresden, Germany \newline
Faculty of Mathematics, Technische Universit\"at Dresden, Dresden, Germany}
\email{sadovek@mpi-cbg.de,~nikolasdvk@gmail.com}

\thanks{\hspace{-4mm}\textit{2020 Mathematics Subject Classification.} 52A35. \\
The research of D.\ McGinnis is funded by the National Science Foundation (NSF) under award no. 2402145.\\
The research of N.\ Sadovek is funded by the Deutsche Forschungsgemeinschaft (DFG, German Research Foundation) under Germany's Excellence Strategy--The Berlin Mathematics Research Center MATH+ (EXC-2046/1, project ID 390685689, BMS Stipend).}


\begin{abstract}
    We solve a long-standing open problem posed by Goodman \& Pollack in 1988 by establishing a necessary and sufficient condition for a family of convex sets in $\R^d$ to admit a $k$-transversal for any $0 \le k \le d-1$. This result is a common generalization of Helly's theorem ($k=0$) and the Goodman-Pollack-Wenger theorem ($k=d-1$). Additionally, we obtain an analogue in the complex setting by characterizing the existence of a complex $k$-transversal to a family of convex sets in $\C^d$, extending the work of McGinnis ($k=d-1$). Our approach is topological and employs a Borsuk-Ulam-type theorem on Stiefel manifolds. Finally, we demonstrate how our results imply the central transversal theorems of \v{Z}ivaljevi\'c-Vre\'cica and Dol'nikov in the real case and of Sadovek-Sober\'on in the complex case.
\end{abstract}

\maketitle


\section{Introduction}

Helly's theorem \cite{helly1923mengen} is a cornerstone result in discrete geometry that has motivated many interesting directions of research that continue to be explored to this day \cites{AmentaHelly2017,HolmsenChapter}. It states that if a finite family $\F$ of convex sets in $\mathbb{R}^d$ has the property that every subfamily of $d+1$ or fewer sets have a nonempty intersection, then the intersection of all the sets in $\F$ is nonempty.  One question explored early on by Vincensini \cite{vincensini1935figures} is whether there exists a Helly-type condition on families $\F$ of convex sets in $\mathbb{R}^d$ that guarantees the existence of a \textit{$k$-transversal} to $\F$. A $k$-transversal to $\F$ is a $k$-dimensional affine subspace (or $k$-flat) of $\mathbb{R}^d$ that intersects each set in $\F$. In particular, Helly's theorem provides a necessary and sufficient condition for such families $\F$ to have a $0$-transversal. Vincensini asked if there exists a constant $r(k,d)$ such that the following statement holds for finite families $\F$ of convex sets in $\mathbb{R}^d$: if every subfamily of $r(k,d)$ sets has a $k$-transversal, then $\F$ has a $k$-transversal. However, this was proven to be false by Santal\'o \cite{santalotheorem1940}.

The first substantial progress toward determining a condition that guarantees the existence of a $k$-transversal for $k>0$ was made by Hadwiger \cite{HadwigerLines} in the case that $k=1$, $d=2$, and the family $\F$ consists of \textit{pairwise disjoint} convex sets. The key observation is that a $1$-transversal to $\F$ determines a \textit{linear ordering} on the sets of $\F$.

\begin{theorem}[Hadwiger \cite{HadwigerLines}]
    A finite family of pairwise disjoint convex sets in $\mathbb{R}^2$ has a $1$-transversal if and only if the sets in the family can be linearly ordered such that any three sets have a $1$-transversal consistent with the ordering.
\end{theorem}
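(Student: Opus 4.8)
I would dispatch the ``only if'' direction immediately: if $\ell$ is a $1$-transversal, orient it; since the sets are convex and pairwise disjoint, the intervals $\ell\cap C_i$ are pairwise disjoint and hence linearly ordered along $\ell$ — this is the required ordering, well-defined up to the reversal inherent in any linear order — and $\ell$ itself is a consistent transversal to any three of the sets.

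For the converse, suppose the $C_i$ are indexed so that every triple admits a consistent transversal; after a routine reduction I would assume $n\ge3$ and the $C_i$ compact. The plan is to parametrize oriented lines in $\R^2$ by the cylinder $Z=S^1\times\R$, where $(u,t)$ is the line $\ell_{u,t}$ of direction $u$ at signed distance $t$ from the origin, and to note that the order induced along $\ell_{u,t}$ is the order of the values $\langle\,\cdot\,,u\rangle$. For $i\ne j$ set $P_{ij}=\{(u,t):\ell_{u,t}\text{ meets both }C_i\text{ and }C_j\text{, meeting }C_i\text{ before }C_j\}$. Because $C_i\cap C_j=\varnothing$, the intervals $\ell_{u,t}\cap C_i$ and $\ell_{u,t}\cap C_j$ are disjoint whenever both are nonempty, so one precedes the other and $\{(u,t):\ell_{u,t}\text{ meets both}\}=P_{ij}\sqcup P_{ji}$; since ``meets before'' is a linear order on the sets a line hits, an oriented line is a transversal consistent with $C_1<\dots<C_n$ exactly when it lies in $X:=\bigcap_{i<j}P_{ij}$. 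Thus the goal is $X\ne\varnothing$, and the hypothesis reads $P_{ij}\cap P_{jk}\cap P_{ik}\ne\varnothing$ for all $i<j<k$.

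The key structural step — the one place pairwise disjointness really enters — is a description of $P_{ij}$: a line $\ell_{u,t}$ meets $C_i$ before $C_j$ for some $t$ if and only if there are $p\in C_i$, $q\in C_j$ with $q-p\in\R_{>0}u$, i.e.\ the open ray $\R_{>0}u$ meets the convex set $K_{ij}:=C_j-C_i$ (such $p,q$ lie on a common oriented line of direction $u$ hitting $C_i$ at the earlier point, and conversely one reads $q-p$ off from points where $\ell_{u,t}$ meets the two sets). Disjointness gives $0\notin K_{ij}$, so a line separates $K_{ij}$ from the origin, and hence the set $A_{ij}\subseteq S^1$ of directions realizing ``$C_i$ before $C_j$'' is an arc of length $<\pi$ (contained in an open half-circle), with $A_{ij}\cap A_{ji}=\varnothing$ because a line through $0$ cannot meet $K_{ij}\not\ni0$ on both sides of $0$. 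Over each $u\in A_{ij}$ the reversed order is then impossible, so the fibre of $P_{ij}$ is exactly the nonempty closed interval $\pi_{u^\perp}(C_i)\cap\pi_{u^\perp}(C_j)$, the overlap of the two orthogonal shadows. So each $P_{ij}$, and likewise every nonempty intersection of them, is a ``fibrewise interval over a short arc'' — in particular contractible. Granting a suitable Helly-type principle on $Z$, the triple hypothesis then promotes to $X\ne\varnothing$ (three being the Helly number in the plane), and the compactness normalization is removed by a limiting argument.

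The main obstacle will be that last invocation. The ambient $Z$ is a non-convex cylinder; the arcs $A_{ij}$ for different pairs need not lie in one half-circle; and the hypothesis only supplies common points for the ``coherent'' triples $\{P_{ij},P_{jk},P_{ik}\}$ rather than for arbitrary triples among $\{P_{ij}\}_{i<j}$. Turning this into a genuine common point requires exploiting the planar geometry — the length-$<\pi$ bound, the antipodal disjointness $A_{ij}\cap A_{ji}=\varnothing$, and the combinatorics of the linear order — to confine the relevant regions to a contractible part of $Z$, equivalently to a chart in which, after reindexing by the order, the $P_{ij}$ become honestly convex, so that a topological Helly theorem (respectively the classical one) applies. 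This is essentially the two-dimensional, pairwise-disjoint case of the Goodman-Pollack hyperplane-transversal reduction, and disjointness is precisely what pulls the Helly number down to $3$.
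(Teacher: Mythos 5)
This statement is quoted in the paper as a known result with a citation to Hadwiger; the paper contains no proof of it, so your attempt can only be judged on its own terms. Your ``only if'' direction is fine, and the setup for the converse (the cylinder of oriented lines, the regions $P_{ij}$, the arcs $A_{ij}$ of length $<\pi$ with $A_{ij}\cap A_{ji}=\emptyset$, and the reformulation of the goal as $\bigcap_{i<j}P_{ij}\neq\emptyset$ with the hypothesis giving $P_{ij}\cap P_{jk}\cap P_{ik}\neq\emptyset$ for $i<j<k$) is a reasonable and correct reformulation. But the argument stops exactly where the theorem actually lives, and the gap you yourself flag is genuine, in two ways. First, the contractibility of intersections of the $P_{ij}$ is asserted, not proved: over a direction $u$ the fibre of $P_{ij}\cap P_{kl}$ is an intersection of two continuously varying closed intervals, which can be nonempty, become empty, and become nonempty again as $u$ moves along the arc $A_{ij}\cap A_{kl}$; so even pairwise intersections need not be ``fibrewise intervals over a short arc,'' and the good-cover hypothesis of a topological Helly theorem is not verified. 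Second, and more fundamentally, any Helly-number-$3$ principle (topological or classical, applied in a chart) for the family $\{P_{ij}\}_{i<j}$ requires \emph{all} triples from this family to have a common point --- including incoherent ones such as $P_{12}\cap P_{34}\cap P_{56}$ --- whereas your hypothesis only supplies the coherent triples $P_{ij}\cap P_{jk}\cap P_{ik}$. Bridging that discrepancy is precisely the substance of Hadwiger's theorem; deferring it to ``exploiting the planar geometry'' or to ``essentially the Goodman--Pollack reduction'' assumes what is to be proved. (The reduction to compact sets is also not quite routine for non-closed sets, but that is a minor issue next to the main one.) As it stands, the proposal is an honest reformulation plus an unproved key step, not a proof.
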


In 1990, a series of three papers among the authors Goodman, Pollack and Wenger \cites{GoodmanHadwiger1988,WengerIntersecting1990,PollackNecessary1990} culminated into the Goodman-Pollack-Wenger theorem, a generalization of Hadwiger's theorem for $(d-1)$-transversals to finite families of convex sets in $\mathbb{R}^d$ with no disjointness condition on the sets. See Section \ref{sec:g-p-w-(d-1)} for more details. This is a celebrated result in geometric transversal theory that has since been expanded upon in many different ways \cites{Anderson1996Oriented,arocha2002separoids,ArochaColorful2008,HolmsenColored2016,cheong2024new}. See also \cites{GoodmanGeometric1993,HolmsenChapter} for surveys on related results in geometric transversal theory.

However, up to this point, there was no known condition that guarantees the existence of a $k$-transversal to a family of convex sets in $\mathbb{R}^d$ for $0< k< d-1$. The question of the existence of such a condition was initially raised by Goodman \& Pollack \cite{GoodmanHadwiger1988} and later also asked in \cite{PollackNecessary1990,HolmsenChapter}.
Arocha, Bracho, Montejano, Oliveros \& Strausz \cite{arocha2002separoids} provided a sufficient condition for the existence of a \emph{virtual} $k$-transversal, which is a broader notion coinciding with the existence of a $k$-transversal in the case when the set family has cardinality $k+2$.
Moreover, we note that a \textit{complex analogue} of the Goodman-Pollack-Wenger theorem was proven by McGinnis in \cite{mcginnis2023necessary}, and this was mistakenly claimed to imply a necessary and sufficient condition for the existence of $(2d-2)$-transversals in $\mathbb{R}^{2d}$. However, this is incorrect and was later corrected in \cite{mcginnis2023complex}. 

The main result of this paper resolves a long-standing open problem posed by Goodman \& Pollack \cite{GoodmanHadwiger1988} by establishing a necessary and sufficient condition for a finite family of convex sets in a $d$-dimensional space to admit a $k$-transversal, for any $0 \leq k \leq d-1$. {The condition is described as follows:}

{
\begin{definition} \label{def:R-dependency-consistent-with-tuples}
    Let $0 \le k < d$ be integers, $\F$ a finite family of convex sets in $\R^d$, and $P \subseteq \R^k$ a finite set of points.
    We say that $\F$ is \emph{$\R$-dependency consistent with $(d-k)$-tuples} in $P$ if there is a map $\phi \colon \F \to P$ such that for any subset $\F' \subseteq \F$ with $|\F'| \leq (k+1)(d-k) +1$ and any $d-k$ affine dependencies
    \[
        \sum_{F \in \F'} a^{(i)}_F = 0 \in \R, \sum_{F \in \F'} a^{(i)}_F \phi(F) = 0 \in \R^k, \hspace{11mm} \textrm{for }i=1, \dots, d-k,
    \]
    which are not all trivial, there exist points $q_F \in F$ and real numbers $r_F \ge 0$, such that the affine dependencies
    \[
        \sum_{F \in \F'} r_Fa^{(i)}_F = 0 \in \R, \sum_{F \in \F'} r_Fa^{(i)}_F q_F = 0 \in \R^d, \hspace{7mm} \textrm{for }i=1, \dots, d-k,
    \]
    are not all trivial, i.e. some $r_Fa^{(i)}_F$ is nonzero.
\end{definition}}

{ The motivation for Definition \ref{def:R-dependency-consistent-with-tuples} is explained in further detail in Section \ref{sec:g-p-w-(d-1)}. We may now state the main result of the paper:}

\begin{theorem} \label{thm:main-R}
    Let $0 \le k < d$ be integers and $\F$ a finite family of convex sets in $\R^d$. Then, there exists a $k$-transversal to $\F$ if and only if $\F$ is $\R$-dependency consistent with $(d-k)$-tuples in a finite set of points $P \subseteq \R^k$.
\end{theorem}

{
The new condition is linear-algebraic, but its two extreme cases ($k=0$ and $k=d-1$) coincide with more geometric conditions from Helly's theorem and the Goodman-Pollack-Wenger theorem, respectively. This is explained in Remark \ref{rem:k=0-and-k=d-1} in more detail. Thus, the $k=0$ case of Theorem \ref{thm:main-R} is Helly's theorem \cite{helly1923mengen}, while $k=d-1$ recovers Goodman-Pollack-Wenger theorem \cites{GoodmanHadwiger1988,WengerIntersecting1990,PollackNecessary1990}.} Moreover, {we show in Section \ref{sec:application} that} our result implies the \textit{central transversal theorem} obtained independently by \v{Z}ivaljevi\'c \& Vre\'cica \cite{zivaljevic1990extension} and Dol'nikov \cite{Dolnikov:1992ut}, which has Rado's centerpoint theorem \cite{Rado:1946ud} and the ham sandwich theorem \cite{Steinhaus1938} as the extremal cases. See Figure \ref{fig:implications} for an illustration.

\begin{figure}[h]
    \centering
    \begin{equation*}
    \begin{tikzcd}[row sep = -4.5mm, column sep = -1mm]
        \text{Helly} & \xLeftarrow{\phantom{aaa}} & \text{Theorem \ref{thm:main-R}} & \xRightarrow{\phantom{aaa}} & \text{Goodman-Pollack-Wenger}\\
        (k=0) & & (0 \le k \le d-1) & & (k=d-1) \\
        \phantom{\xDownarrow{3mm}} & & {} & & {}\\
        \xDownarrow{3mm} & & \xDownarrow{3mm} & & \xDownarrow{3mm} \\
        \phantom{\xDownarrow{3mm}} & & {} & & {}\\
        \text{Rado's centerpoint} & \xLeftarrow{\phantom{aaa}} & \text{Central transversal theorem} & \xRightarrow{\phantom{aaa}} & \text{Ham sandwich}\\
        (k=0) & & (0\le k \le d-1) & & (k=d-1)\\
    \end{tikzcd}
\end{equation*}
    \caption{Implication of geometric transversal theorems.}
    \label{fig:implications}
\end{figure}


\noindent Furthermore, we prove a complex analogue of this result for the existence of a $\C$ $k$-transversal to a family of convex sets in $\C^d$, extending the work of McGinnis \cite{mcginnis2023complex} for $k=d-1$. Here, a $\C$ $k$-transversal is a complex $k$-dimensional affine subspace of $\C^d$ that intersects each set in $\F$. {The condition is a complex version from Definition \ref{def:R-dependency-consistent-with-tuples}, which is described in Definition \ref{def:dependency-consistent-with-tuples}.}

\begin{theorem} \label{thm:main-C}
    Let $0 \le k < d$ be integers and $\F$ a finite family of convex sets in $\C^d$. Then, there exists a $\C$ $k$-transversal to $\F$ if and only if $\F$ is $\C$-dependency consistent with $(d-k)$-tuples in a finite set of points $P \subseteq \C^k$.
\end{theorem}

\subsection*{Overview}

In Section \ref{sec:main-result}, we treat both Theorem \ref{thm:main-R} and Theorem \ref{thm:main-C} in a unifying way. Their proofs are topological and rely on a configuration space -- test map scheme. The key ingredient is a Borsuk-Ulam-type result on Stiefel manifolds, which was proved in the real case by Chan, Chen, Frick \& Hull \cite{chan2020borsuk} and in the complex case by Sadovek \& Sober\'on \cite{sadovek2024complex}.

In Section \ref{sec:application}, we apply Theorem \ref{thm:main-R} to give another proof of the central transversal theorem due to \v{Z}ivaljevi\'c \& Vre\'cica \cite{zivaljevic1990extension} and Dol'nikov \cite{Dolnikov:1992ut}, a staple result in discrete geometry. Similarly, Theorem \ref{thm:main-C} implies the recent complex analogue of the central transversal theorem due to Sadovek \& Sober\'on \cite{sadovek2024complex} by a completely analogous argument, giving a complex version of the diagram of implications in Figure \ref{fig:implications}. Thus, our main theorem, Theorem \ref{thm:main-F}, can be seen as a common generalization of the central transversal theorem and its complex analogue.

\section{The Goodman-Pollack-Wenger theorem and a complex analogue}
\label{sec:g-p-w-(d-1)}

In this section, we recall prior work on the existence of a $(d-1)$-transversal in both the real and the complex case, which sets the stage for the formulation of our main result in Section \ref{sec:main-result}.

Namely, to pass to a higher dimension setting, the linear ordering in Hadwiger's theorem is replaced by the notion of \textit{order type} of a set of points in $\mathbb{R}^{k}$ for some $0\leq k\leq d-1$ in the Goodman-Pollack-Wenger theorem. However, we will state the Goodman-Pollack-Wenger theorem as it appears in \cite{HolmsenChapter}, which is equivalent to the original formulation in \cite{PollackNecessary1990}. The following definition will be needed.

\begin{definition}\label{def:SepConsistent}
    Let $\F$ be a finite family of convex sets in $\mathbb{R}^d$. We say that $\F$ \textit{separates consistently} with a set of points $P\subseteq \mathbb{R}^k$ if there exists a map $\phi: \F \rightarrow P$ such that for any two subfamilies $\F_1$, $\F_2$ with $|\F_1| + |\F_2| \leq k+2$
    \[
    \conv(\F_1) \cap \conv(\F_2) = \emptyset \implies \conv(\phi(\F_1))\cap \conv(\phi(\F_2)) = \emptyset.
    \]
\end{definition}

We note that it is a consequence of the well-known Kirchberger's theorem \cite{KirchbergerTheorem} that the condition $|\F_1| + |\F_2| \leq k+2$ in Definition \ref{def:SepConsistent} could be removed and it would still be an equivalent definition. The statement of the Goodman-Pollack-Wenger theorem can be stated as follows:

\begin{theorem}[Goodman-Pollack-Wenger \cite{PollackNecessary1990}]\label{thm:GPW}
    A finite family of convex sets $\F$ in $\mathbb{R}^d$ has a $(d-1)$-transversal if and only if $\F$ separates consistently with a set $P\subseteq \mathbb{R}^{d-1}$.
\end{theorem}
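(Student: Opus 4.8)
The plan is to treat the two implications separately. The forward (``only if'') direction I would dispatch by an elementary construction. The reverse (``if'') direction is the substantive half: I would either reduce it to the machinery of Section~\ref{sec:main-result}, since it is the case $k=d-1$ of Theorem~\ref{thm:main-R}, or else reconstruct the Goodman--Pollack--Wenger argument directly.

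For ``only if'', I would argue as follows. Suppose $H\subseteq\R^d$ is a hyperplane meeting every member of $\F$. Fix an affine identification $H\cong\R^{d-1}$ and, for each $C\in\F$, choose a point $p_C\in C\cap H$; set $P=\{p_C:C\in\F\}$ and $\phi(C)=p_C$. For any subfamilies $\F_1,\F_2$ we then have $\conv(\phi(\F_i))\subseteq H\cap\conv(\F_i)$, since each $p_C$ lies in $H$ and in $C$ and both $H$ and $\conv(\F_i)$ are convex. Hence $\conv(\F_1)\cap\conv(\F_2)=\emptyset$ forces $\conv(\phi(\F_1))\cap\conv(\phi(\F_2))=\emptyset$, so $\F$ separates consistently with $P$.

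For ``if'', suppose $\phi\colon\F\to P\subseteq\R^{d-1}$ witnesses consistent separation. First I would use Kirchberger's theorem to drop the cardinality restriction in Definition~\ref{def:SepConsistent}, so that $\phi$ preserves \emph{every} separation between unions of members of $\F$. Then I would proceed by induction on $d$: projecting $\R^d$ along a generic line lowers the ambient dimension by one and, because all separations are preserved, sends $\F$ to a family still consistent with the correspondingly projected point configuration, so the inductive hypothesis produces a hyperplane transversal of the projection; pulling this back fixes a $(d-2)$-dimensional space of admissible normal directions for the sought hyperplane in $\R^d$. Finally I would run a continuity argument over the remaining one parameter of directions and offsets --- tracking the positions $\inf_{x\in C}\langle u,x\rangle$ and $\sup_{x\in C}\langle u,x\rangle$ and invoking the Intermediate Value Theorem --- to locate a hyperplane actually meeting every $C$; the order-type data recorded by $P$ is exactly what makes this continuity argument succeed.

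The step I expect to be the genuine obstacle is this inductive/continuity argument in the ``if'' direction: making the projection respect \emph{all} Radon-type affine dependencies simultaneously, correctly handling unbounded sets and degenerate configurations (coincident or affinely degenerate images $\phi(C)$, or members forced onto $H$), and certifying that the final sweep succeeds. In the present paper this is precisely the difficulty that the configuration space -- test map scheme together with the Borsuk--Ulam-type theorem on Stiefel manifolds absorbs once and for all, so the most economical route in this manuscript is to deduce Theorem~\ref{thm:GPW} from the case $k=d-1$ of Theorem~\ref{thm:main-R}. The one thing genuinely needing checking for that deduction is the (elementary but slightly fiddly) equivalence between ``separates consistently with a point set'' and being ``$\R$-dependency consistent with $1$-tuples'' in the sense of Definition~\ref{def:dependency-consistent-with-tuples}, which I would isolate and prove as a small lemma first.
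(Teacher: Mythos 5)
Your preferred route --- handling the ``only if'' direction by choosing points $p_C \in C \cap H$ and an affine identification $H \cong \R^{d-1}$, and deducing the ``if'' direction from the $k=d-1$ case of Theorem \ref{thm:main-R} via a lemma equating Definition \ref{def:SepConsistent} with being $\R$-dependency consistent with $1$-tuples --- is essentially the paper's own treatment: the paper cites Theorem \ref{thm:GPW} rather than reproving it, the translation you plan to isolate is exactly Proposition \ref{prop:Dependency} (quoted from \cite{mcginnis2023necessary}, and note the cardinality bounds match, $|\F'| \le d+1$ on both sides), the specialization is recorded in Remark \ref{rem:k=0-and-k=d-1}, and your transversal-to-points construction is the same as the remark preceding Theorem \ref{thm:main-F}. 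Only your alternative induction-plus-continuity sketch would not stand on its own as a proof of the ``if'' direction, but you correctly do not rely on it.
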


As outlined in \cite{mcginnis2023necessary}, Definition \ref{def:SepConsistent} has the following equivalent linear-algebraic formulation. We present it as it is more similar to the condition present in our main result.

\begin{proposition}[{\cite{mcginnis2023necessary}}]\label{prop:Dependency}
    A finite family of convex sets $\F$ in $\mathbb{R}^d$ separates consistently with a set of points $P\subseteq \mathbb{R}^k$ if and only if there exists a map $\phi: \F \rightarrow P$ such that for any subfamily $\F'$ with $|\F'|\leq k+2$ and 
    for any nontrivial affine dependence 
\[
\sum_{F\in \F'} a_F=0,\, \sum_{F\in \F'} a_F\phi(F)= 0,
\]
there exist points $q_F\in F$ and real numbers $r_F\geq 0$ such that 
\[
\sum_{F\in \F'} r_Fa_F=0,\, \sum_{F\in \F'} (r_Fa_F)q_F= 0
\]
is an affine dependence of the points $q_F$ and the numbers $r_Fa_F$ are not all $0$. 
\end{proposition}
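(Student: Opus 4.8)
The plan is to prove, for a \emph{fixed} map $\phi\colon\F\to P$, that the separation condition of Definition~\ref{def:SepConsistent} holds for $\phi$ if and only if the linear-algebraic condition above holds for $\phi$; the asserted equivalence of the two existence statements then follows at once. The whole argument rests on a standard dictionary translating ``two convex hulls intersect'' into ``there is an affine dependence with a prescribed sign pattern'', which I would record in two forms. First, for \emph{disjoint} subfamilies $\F_1,\F_2\subseteq\F$, one has $\conv(\phi(\F_1))\cap\conv(\phi(\F_2))\neq\emptyset$ if and only if there is a nontrivial affine dependence $\sum_{F\in\F_1\cup\F_2}a_F=0$, $\sum_{F\in\F_1\cup\F_2}a_F\phi(F)=0$ with $a_F\ge 0$ for $F\in\F_1$ and $a_F\le 0$ for $F\in\F_2$; one passes from such a dependence to the common point $\frac1s\sum_{F\in\F_1}a_F\phi(F)$, where $s=\sum_{F\in\F_1}a_F>0$, and back. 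Second, using $\conv\big(\bigcup_{F\in\F_1}F\big)=\{\sum_{F\in\F_1}\lambda_Fq_F:\ q_F\in F,\ \lambda_F\ge0,\ \sum\lambda_F=1\}$ (a consequence of the convexity of the $F$), the same statement holds with $\phi(F)$ replaced throughout by chosen points $q_F\in F$: namely $\conv(\F_1)\cap\conv(\F_2)\neq\emptyset$ if and only if there exist $q_F\in F$ and a nontrivial affine dependence among them of the same sign pattern.

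For the implication (separation) $\Rightarrow$ (linear-algebraic), take $\F'\subseteq\F$ with $|\F'|\le k+2$ and a nontrivial affine dependence $(a_F)_{F\in\F'}$ of the points $\phi(F)$. Put $\F_1=\{F:a_F>0\}$ and $\F_2=\{F:a_F<0\}$; these are disjoint and nonempty (since $\sum a_F=0$ and the dependence is nontrivial), and $|\F_1|+|\F_2|\le|\F'|\le k+2$. By the first form of the dictionary $\conv(\phi(\F_1))\cap\conv(\phi(\F_2))\neq\emptyset$, so the separation condition (contrapositively) forces $\conv(\F_1)\cap\conv(\F_2)\neq\emptyset$. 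Expanding a common point in both hulls yields $\lambda_F\ge0$ ($F\in\F_1$) and $\mu_F\ge0$ ($F\in\F_2$), each summing to $1$, together with points $q_F\in F$ such that $\sum_{F\in\F_1}\lambda_Fq_F=\sum_{F\in\F_2}\mu_Fq_F$; extend $q_F$ arbitrarily over the remaining $F\in\F'$. Then $b_F:=\lambda_F$ on $\F_1$, $b_F:=-\mu_F$ on $\F_2$, $b_F:=0$ otherwise, is a nontrivial affine dependence of the $q_F$, and since $\operatorname{sign}(b_F)\in\{0,\operatorname{sign}(a_F)\}$ for every $F$, setting $r_F:=b_F/a_F$ when $a_F\neq 0$ and $r_F:=0$ otherwise gives $r_F\ge0$ with $r_Fa_F=b_F$ for all $F$ — exactly the required conclusion.

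For the converse (linear-algebraic) $\Rightarrow$ (separation), let $\F_1,\F_2$ with $|\F_1|+|\F_2|\le k+2$ satisfy $\conv(\F_1)\cap\conv(\F_2)=\emptyset$; we must show $\conv(\phi(\F_1))\cap\conv(\phi(\F_2))=\emptyset$. The hypothesis forces $\F_1\cap\F_2=\emptyset$ (a common nonempty set would lie in both hulls), so if $\conv(\phi(\F_1))\cap\conv(\phi(\F_2))$ were nonempty, the first form of the dictionary would produce a nontrivial affine dependence $(a_F)_{F\in\F_1\cup\F_2}$ of the $\phi(F)$ with positive support in $\F_1$ and negative support in $\F_2$. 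Applying the linear-algebraic condition to $\F':=\F_1\cup\F_2$ (of size $\le k+2$) yields $q_F\in F$ and $r_F\ge0$ with $(r_Fa_F)$ a nontrivial affine dependence of the $q_F$; as $r_F\ge0$, its positive (resp.\ negative) support is still contained in that of $(a_F)$, hence in $\F_1$ (resp.\ $\F_2$), so the second form of the dictionary gives $\conv(\F_1)\cap\conv(\F_2)\neq\emptyset$, a contradiction. Once the two dictionaries are in place the argument is pure bookkeeping; the only points needing a little care are matching sign patterns so the multipliers $r_F$ come out nonnegative, handling the indices $F$ with $a_F=0$, and disposing of the overlapping case $\F_1\cap\F_2\neq\emptyset$ — so I do not anticipate a real obstacle.
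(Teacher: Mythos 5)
Your proof is correct: the sign-pattern dictionary between intersecting convex hulls and nontrivial affine dependences is exactly the right tool, and both directions — including the care with indices where $a_F=0$, the nonnegativity of the multipliers $r_F=b_F/a_F$, and the observation that $\conv(\F_1)\cap\conv(\F_2)=\emptyset$ forces $\F_1\cap\F_2=\emptyset$ — check out (modulo the usual tacit assumption that the sets in $\F$ are nonempty). Note that this paper does not prove the proposition itself but quotes it from \cite{mcginnis2023necessary}; your argument is the standard equivalence used there, so there is nothing to flag.
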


An adaptation of the condition in Proposition \ref{prop:Dependency} to complex dependencies was used in \cite{mcginnis2023necessary} to formulate the key definition in the complex analogue of the Goodman-Pollack-Wenger theorem due to McGinnis. In what follows, a convex set in $\mathbb{C}^d$ is understood to be convex in the usual sense, namely, that for any two points in the set the real line segment between them is contained in the set. 

\begin{definition}\label{def:depConsistent}
Let $\F$ be a finite family of convex sets in $\mathbb{C}^d$, and let $P\subseteq \mathbb{C}^k$. We say that $\F$ is \textit{dependency-consistent} with $P$ if there exists a map $\phi:\F \rightarrow P$ such that for every subfamily $\F'$ with $|\F'|\leq 2k+3$ and every affine dependence
\[
\sum_{F\in \F'}a_F=0,\, \sum_{F\in \F'}a_F\phi(F)=0
\]
for complex numbers $a_F$, there exist real numbers $r_F\geq 0$ and points $q_F\in F$ for $F\in \F'$ such that
\[
\sum_{F\in \F'}r_Fa_F=0,\, \sum_{F\in \F'}(r_Fa_F)q_F=0
\]
where not all of the values $r_Fa_F$ are 0.
\end{definition}

\begin{theorem}[\cite{mcginnis2023complex}]\label{thm:ComplexGPW}
A finite family of convex sets $\F$ in $\mathbb{C}^{d}$ has a complex $(d-1)$-transversal if and only if $\F$ is dependency-consistent with a set $P\subseteq \mathbb{C}^{d-1}$.
\end{theorem}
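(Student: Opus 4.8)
The ``only if'' direction is routine. Given a complex $(d-1)$-transversal $L \subseteq \C^d$, I would fix a complex-affine isomorphism $\psi \colon L \to \C^{d-1}$, choose a point $q_F \in F \cap L$ for each $F \in \F$, and set $\phi(F) := \psi(q_F)$ and $P := \phi(\F)$. Since $\psi$ is affine, any nontrivial complex affine dependence $\sum_{F \in \F'} a_F = 0$, $\sum_{F \in \F'} a_F \phi(F) = 0$ among the images is carried back to the affine dependence $\sum_{F \in \F'} a_F = 0$, $\sum_{F \in \F'} a_F q_F = 0$ among the $q_F$ in $\C^d$, so the choice $r_F \equiv 1$ verifies the condition in Definition \ref{def:depConsistent}. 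All the substance is in the ``if'' direction, and it is worth stressing up front that it does not follow from the real Goodman--Pollack--Wenger theorem \ref{thm:GPW} applied in $\R^{2d}$: a complex $(d-1)$-flat is a very special real $(2d-2)$-flat, so separating consistently as a real configuration is genuinely weaker than the complex condition.

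For ``if'', assume $\phi \colon \F \to P \subseteq \C^{d-1}$ witnesses dependency-consistency, and set up a configuration space--test map scheme for complex hyperplane transversals. I would encode a candidate transversal by a defining affine functional $z \mapsto \langle w,z\rangle - c$ with $(w,c) \in (\C^d \setminus \{0\}) \times \C$, normalize $(w,c)$ onto the unit sphere $S^{2d+1} \subseteq \C^{d+1}$, and compactify by adjoining the limiting ``hyperplanes at infinity'', producing a compact space $X$ carrying a free action of the circle $S^1$ by scalar multiplication on $(w,c)$ -- the quotient recording the actual hyperplane while the ``side'' data of a point transforms by the standard rotation. The task is then to build, out of the sets $F$ together with the auxiliary points $\phi(F) \in \C^{d-1}$, an $S^1$-equivariant map $f \colon X \times \Delta \to S(V)$ into the sphere of a suitable complex $S^1$-representation $V$ (the factor $\Delta$ being a weight simplex), arranged so that a zero of $f$ is exactly a description of a complex $(d-1)$-transversal of $\F$. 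Dependency-consistency is precisely what makes $f$ well-defined and continuous: it guarantees that the ``directions of failure'' of the sets can be matched to affine dependences of the model points $\phi(F)$, so that no local obstruction to defining $f$ arises. If $\F$ had no complex $(d-1)$-transversal, $f$ would be nowhere zero, hence a genuine $S^1$-map to a representation sphere; the complex Borsuk--Ulam-type theorem on complex Stiefel manifolds of Sadovek--Sober\'on \cite{sadovek2024complex}, in the case $k=d-1$ (where the relevant Stiefel manifold is $S^{2d-1}$), rules this out once the connectivity of $X \times \Delta$ exceeds $\dim_\R S(V)$, an inequality one checks by a direct count. The resulting contradiction produces the transversal.

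The main obstacle is the three-way bookkeeping tying the scheme together. First, the compactification of the space of complex affine hyperplanes must be carried out so that the $S^1$-action stays free and the domain is as highly connected as the numerical inequality demands. Second, one must show that dependency-consistency -- which, by a Carath\'eodory/Radon-type bound for affine dependences in $\C^{d-1} \cong \R^{2d-2}$, need only be tested on subfamilies $\F'$ with $|\F'| \le 2(d-1)+3$, explaining the constant $2k+3$ in Definition \ref{def:depConsistent} -- is exactly the condition under which the test map extends continuously over all of $X \times \Delta$. Third, one must verify that a zero of $f$ encodes a complex, and not merely a real, $(d-1)$-transversal. The second and third points are where the specifically complex nature of the problem has to be handled by hand, since the oriented-matroid and order-type machinery available over $\R$ has no analogue here, and this is ultimately what forces the topological argument. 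I would finally note that this entire argument is the $k=d-1$ specialization of the unified configuration space--test map scheme carried out in Section \ref{sec:main-result}, so once that machinery is in place the present theorem is immediate.
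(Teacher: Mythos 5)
Your ``only if'' direction is fine and matches the standard argument (choose $q_F\in F\cap L$, push forward by an affine isomorphism, take $r_F\equiv 1$). The problem is the ``if'' direction: what you give is a frame for a proof rather than a proof, and the one concrete mechanism you do describe assigns the hypothesis the wrong job. In the actual argument (McGinnis's, and equally the $k=d-1$, $\FF=\C$ case of Theorem~\ref{thm:main-F}), the test map needs no hypothesis at all to be well defined and continuous: one lifts the (compact) sets into $\C^d+e_{d+1}\subseteq\C^{d+1}$, and for a unit vector $v\in W_1(\C^{d+1})=S^{2d+1}$ lets $p_{V,F}$ be the point of the projection of $F$ onto $V=\spanC\{v\}$ closest to the origin, and sends
\[
v\longmapsto\Big(\sum_{F\in\F}\langle v,p_{V,F}\rangle,\ \sum_{F\in\F}\langle v,p_{V,F}\rangle\,\phi(F)\Big)\in\C^{d},
\]
a $\Z_2$-equivariant map whose continuity is automatic from convexity. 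Dependency-consistency enters only \emph{after} the Borsuk--Ulam step produces a zero $v$: assuming no complex $(d-1)$-transversal, some projections miss the origin, Carath\'eodory in $\C^{d}\cong\R^{2d}$ extracts a subfamily of size at most $2d+1=2(d-1)+3$ (this, not a continuity issue, is where the constant $2k+3$ of Definition~\ref{def:depConsistent} comes from), the dependency condition supplies $q_F\in F$ and $r_F\ge 0$ with a nontrivial dependence, and the contradiction is the strict positivity $\langle\proj_V(q_F),p_{V,F}\rangle_\R>0$ forced by $p_{V,F}$ being the closest point of a convex set not containing $0$. In particular a zero of the test map does \emph{not} ``exactly describe a transversal'', contrary to your design requirement.

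By contrast, your scheme asks for a compactified hyperplane space $X$ with a free $S^1$-action, a weight simplex $\Delta$, a representation sphere $S(V)$, an equivariant $f$ whose zeros are precisely transversals, and a connectivity count --- none of which is constructed, and the assertion that dependency-consistency ``is precisely what makes $f$ well-defined and continuous'' is not backed by any construction in which that could be checked; it is also at odds with how the condition can be used, since it is an existential statement about dependences of chosen points $q_F$, not a local/continuous datum one can feed into a map. The items you defer to ``bookkeeping'' (building $f$, the role of the hypothesis, why a zero gives a complex rather than merely real flat) are exactly the substance of the theorem, so the proposal has a genuine gap there. Two smaller corrections: after the standard lift the relevant sphere is $W_1(\C^{d+1})=S^{2d+1}$ with the antipodal $\Z_2$-action (for $k=d-1$ classical Borsuk--Ulam already suffices; the Stiefel-manifold strength of Theorem~\ref{thm:map-from-stiefel} is only needed for $k<d-1$), not $S^{2d-1}$ with an $S^1$-action; and the final appeal to ``the machinery of Section~\ref{sec:main-result}'' is circular as a blind proof, since that machinery is the proof being asked for.
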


\section{The main result}
\label{sec:main-result}

The following definition is the key notion that is needed to state our main result.

\begin{definition} \label{def:dependency-consistent-with-tuples}
    Let $0 \le k < d$ be integers, $\FF \in \{\R, \C\}$ a field, $\F$ a finite family of convex sets in $\FF^d$, and $P \subseteq \FF^k$ a finite set of points.
    We say that $\F$ is \emph{$\FF$-dependency consistent with $(d-k)$-tuples} in $P$ if there is a map $\phi \colon \F \to P$ such that for any subset $\F' \subseteq \F$ with $|\F'| \leq (k+1)(d-k)\dim_\R\FF +1$ and any $d-k$ affine dependencies
    \[
        \sum_{F \in \F'} a^{(i)}_F = 0 \in \FF, \sum_{F \in \F'} a^{(i)}_F \phi(F) = 0 \in \FF^k, \hspace{11mm} \textrm{for }i=1, \dots, d-k,
    \]
    which are not all trivial,
    there exist points $q_F \in F$ and real numbers $r_F \ge 0$, such that the affine dependencies
    \[
        \sum_{F \in \F'} r_Fa^{(i)}_F = 0 \in \FF, \sum_{F \in \F'} r_Fa^{(i)}_F q_F = 0 \in \FF^d, \hspace{7mm} \textrm{for }i=1, \dots, d-k,
    \]
    are not all trivial{, i.e. some $r_Fa^{(i)}_F$ is nonzero}.
\end{definition}

\begin{remark} \label{rem:k=0-and-k=d-1}
    Inserting $k=0$ in the real case of the previous definition, we obtain precisely the condition in Helly's theorem:
    \begin{itemize}
        \item Assuming that $\F$ is $\R$-dependency consistent with $d$-tuples in $P=\{0\}=\R^0$, Helly's condition follows by choosing, for a given subfamily $\F'=\{F_0, \dots, F_d\} \subseteq \F$, the coefficients in the $i$'th affine dependency to be
        \[
            a_{F_0}^{(i)} = 1,~a_{F_i}^{(i)} = -1, \hspace{3mm} \textrm{and} \hspace{3mm} a_{F_j}^{(i)} = 0, \hspace{3mm} \textrm{for}~j\neq 0,i.
        \]
        {By the condition on $\F$, there are real numbers $r_{F_i} \ge 0$ and points $q_{F_i} \in F_i$ such that
        \[
            r_{F_0}-r_{F_i} = 0 \in \R,~r_{F_0}q_{F_0}-r_{F_i}q_{F_i} = 0 \in \R^d, \hspace{3mm} \text{for}~i=1, \dots, d,
        \]
        are affine dependencies which are not all trivial. Consequently, $r_{F_0} = \dots = r_{F_d}$ are all strictly positive and $q_{F_0} = \dots =q_{F_d}$ is a common point of sets in $\F'$.}
        \item Assuming Helly's condition on $\F$, it follows that $\F$ is $\R$-dependency consistent with $d$-tuples in $P=\{0\}=\R^0$ by choosing $q_F$ to be the common point of the sets in $\F'$ and $r_F=1$, for each $F \in \F'$.
    \end{itemize}
    On the other hand, putting $k=d-1$ in Definition \ref{def:dependency-consistent-with-tuples}, we recover the Goodman-Pollack-Wenger condition as presented in Proposition \ref{prop:Dependency} in the real case and the complex analogue from Definition \ref{def:depConsistent}.
\end{remark}

We will denote by $W_n(\FF^d)$ the Stiefel manifold of $\FF$-orthonormal $n$-frames in $\FF^d$, for a field $\FF \in \{\R, \C\}$. As a key ingredient for our proof, we will employ the following Borsuk-Ulam-type theorem. The real case is due to Chan, Chen, Frick \& Hull \cite{chan2020borsuk}, while the complex case was proved by Sadovek \& Sober\'on \cite{sadovek2024complex}.

\begin{theorem}[\cites{chan2020borsuk,sadovek2024complex}] \label{thm:map-from-stiefel}
    Let $0 \le n < d$ be integers and $\FF \in \{\R, \C\}$ a field. Then, every continuous $\Z_2^n$-equivariant map
    \[
        W_n(\FF^d) \longrightarrow \FF^{d-1} \oplus \FF^{d-2} \oplus \dots \oplus \FF^{d-n}
    \]
    has the origin in its image, where the group acts by product antipodal action on both spaces.
\end{theorem}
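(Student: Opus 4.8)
The plan is to argue by contradiction via the Fadell--Husseini ideal-valued index with $\Z_2$-coefficients. Suppose some continuous $\Z_2^n$-equivariant map $f\colon W_n(\FF^d)\to V:=\FF^{d-1}\oplus\cdots\oplus\FF^{d-n}$ avoids the origin; normalizing yields a $\Z_2^n$-equivariant map $g\colon W_n(\FF^d)\to S(V)$. Set $s:=\dim_\R\FF$. In the product antipodal action the $i$-th factor of $\Z_2^n$ acts on the summand $\FF^{d-i}$ by $-\id$ and trivially on the rest, so as a real representation $V\cong\bigoplus_{i=1}^n\sigma_i^{\oplus(d-i)s}$, where $\sigma_i$ is the sign representation of the $i$-th factor. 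Since $H^*(B\Z_2^n;\Z_2)=\Z_2[t_1,\dots,t_n]$ is a domain, the Gysin sequence of the sphere bundle $S(V)_{h\Z_2^n}\to B\Z_2^n$ gives $\mathrm{Ind}_{\Z_2^n}(S(V))=(\mu)$, where $\mu:=\prod_{i=1}^n t_i^{(d-i)s}$ is the equivariant Euler class of $V$. By monotonicity of the index, $g$ forces $\mu\in\mathrm{Ind}_{\Z_2^n}(W_n(\FF^d))$, i.e.\ $\mu$ maps to $0$ under $H^*(B\Z_2^n;\Z_2)\to H^*_{\Z_2^n}(W_n(\FF^d);\Z_2)$. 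Everything thus reduces to showing that this class is in fact nonzero.

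To get a handle on $H^*_{\Z_2^n}(W_n(\FF^d);\Z_2)$ I would exploit the iterated sphere-bundle tower $W_n(\FF^d)\to W_{n-1}(\FF^d)\to\cdots\to W_1(\FF^d)=S(\FF^d)$: the bundle $W_{j+1}(\FF^d)\to W_j(\FF^d)$ is the unit sphere bundle of the orthogonal-complement bundle $\eta_j$ (of $\FF$-rank $d-j$), and it is equivariant with the first $j$ factors acting on the base (trivially on the fibers of $\eta_j$) while the $(j+1)$-st acts antipodally on the fibers. The $\Z_2^n$-action on $W_n(\FF^d)$ is free, so $H^*_{\Z_2^n}(W_n(\FF^d);\Z_2)=H^*(W_n(\FF^d)/\Z_2^n;\Z_2)$, and the tower passes to the Borel constructions $X_j:=E\Z_2^n\times_{\Z_2^n}W_j(\FF^d)$, realizing $X_n$ as an iterated sphere bundle over $X_0=B\Z_2^n$. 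The essential computation is that of the equivariant Euler class $e_j\in H^{(d-j)s}(X_j;\Z_2)$ of the $j$-th bundle. The tautological frame sub-bundle over $W_j(\FF^d)$ is equivariantly isomorphic to $\bigoplus_{i=1}^j\sigma_i$, and together with $\eta_j$ it spans the trivial bundle $\underline{\FF}^d$, which carries the trivial action; hence $w^{\Z_2^j}(\eta_j)=\prod_{i=1}^j(1+t_i)^{-s}$ is a class pulled back from $B\Z_2^j$, and the tensor-product formula for Stiefel--Whitney classes gives
\[
  e_j=\sum_{l=0}^{(d-j)s}t_{j+1}^{(d-j)s-l}\,w_l^{\Z_2^j}(\eta_j),
\]
a monic polynomial of degree $(d-j)s$ in $t_{j+1}$ whose lower coefficients lie in $\Z_2[t_1,\dots,t_j]$ (with $e_0=t_1^{ds}$). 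Monic polynomials are non-zero-divisors, so each Gysin sequence degenerates and, inductively,
\[
  H^*_{\Z_2^n}(W_n(\FF^d);\Z_2)\cong\Z_2[t_1,\dots,t_n]/(e_0,e_1,\dots,e_{n-1}).
\]

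It now suffices to show $\mu\neq 0$ in this quotient. With respect to the lexicographic order $t_n\succ\cdots\succ t_1$, the leading term of $e_j$ is $t_{j+1}^{(d-j)s}$; these leading monomials involve pairwise distinct variables, so Buchberger's criterion makes $\{e_0,\dots,e_{n-1}\}$ a Gr\"obner basis of the defining ideal, and the monomials divisible by no $t_{j+1}^{(d-j)s}$ form a $\Z_2$-basis of the quotient. The monomial $\mu=\prod_{i=1}^n t_i^{(d-i)s}$ has $t_{j+1}$-exponent $(d-j-1)s<(d-j)s$ for each $j=0,\dots,n-1$, so it is not divisible by any of these leading monomials; hence $\mu$ is a standard monomial and $\mu\neq 0$. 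This contradicts $\mu\in\mathrm{Ind}_{\Z_2^n}(W_n(\FF^d))$, and the theorem follows.

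I expect the crux to be the middle step: establishing the presentation of $H^*_{\Z_2^n}(W_n(\FF^d);\Z_2)$, and in particular controlling the lower-order terms of the Euler classes $e_j$. Those terms are harmless for the Gr\"obner argument precisely because they are pulled back from $B\Z_2^j$, which rests on the equivariant stable triviality of $\eta_j$ and a careful accounting of equivariant Stiefel--Whitney classes, carried out uniformly over $\FF\in\{\R,\C\}$ --- where for $\FF=\C$ the sign action on $\C^m$ contributes $2m$ copies of the real sign representation. By comparison, the base case $n=1$ (the classical Borsuk--Ulam theorem for $S^{ds-1}\to\R^{(d-1)s}$) and the identification $\mathrm{Ind}_{\Z_2^n}(S(V))=(\mu)$ are routine.
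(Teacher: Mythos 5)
The paper does not actually prove Theorem \ref{thm:map-from-stiefel}; it imports it, citing Chan--Chen--Frick--Hull for $\FF=\R$ and Sadovek--Sober\'on for $\FF=\C$, so there is no in-paper proof to compare against. Judged on its own, your argument is correct and self-contained in outline: it is the classical ideal-valued index route (going back to Fadell--Husseini in the real case), reducing the statement to the nonvanishing of $\mu=\prod_i t_i^{(d-i)s}$ in $H^*_{\Z_2^n}(W_n(\FF^d);\Z_2)$ and then computing that ring as $\Z_2[t_1,\dots,t_n]/(e_0,\dots,e_{n-1})$ via the tower of sphere bundles. The key identifications check out: the tautological line sub-bundles are equivariantly the sign lines $\sigma_i$ (realifying to $s=\dim_\R\FF$ copies in the complex case), the $(j+1)$-st factor twists the complement bundle $\eta_j$ fiberwise, and the tensor formula gives $e_j$ monic of degree $(d-j)s$ in $t_{j+1}$ with lower coefficients coming from $\Z_2[t_1,\dots,t_j]$; note also that only the containment $\mu\in\mathrm{Ind}_{\Z_2^n}(S(V))$ is needed, which is immediate from the Gysin sequence without invoking that $H^*(B\Z_2^n;\Z_2)$ is a domain.

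Two small points you should tighten when writing this in full. First, the expression $\prod_{i\le j}(1+t_i)^{-s}$ needs justification: it makes sense because $t_1,\dots,t_j$ are nilpotent in $H^*(X_j;\Z_2)$ (the $\Z_2^j$-action on the compact manifold $W_j(\FF^d)$ is free), or you can avoid inverses altogether by solving the Whitney-sum relation $w(\eta_j^{hG})\cdot\prod_{i\le j}(1+t_i)^s=1$ recursively for the classes $w_l(\eta_j^{hG})$. Second, the non-zero-divisor claim for $e_j$ should be read inside the inductively established presentation $H^*(X_j)\cong\bigl(\Z_2[t_1,\dots,t_j]/(e_0,\dots,e_{j-1})\bigr)[t_{j+1},\dots,t_n]$, where monicity in $t_{j+1}$ does give injectivity of the cup product; with that structure in hand you do not need Gr\"obner bases at all, since the quotient is an iterated extension by polynomials monic in a fresh variable, so the monomials whose $t_{j+1}$-exponent is below $(d-j)s$ form a $\Z_2$-basis, and $\mu$ is one of them.
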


The theorem above, in both the real and complex case, follows from computations of the appropriate Fadell-Husseini indices \cite{Fadell:1988tm}.

We are now ready to prove the main result of the paper. 

\begin{theorem} \label{thm:main-F}
    Let $0 \le k < d$ be integers, $\FF \in \{\R, \C\}$ a field, and $\F$ a finite family of convex sets in $\FF^d$. Then, $\F$ is $\FF$-dependency consistent with $(d-k)$-tuples in a finite set of points in $\FF^k$ if and only if there exists a $k$-dimensional $\FF$-transversal to $\F$.
\end{theorem}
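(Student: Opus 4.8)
The plan is to prove the nontrivial ``only if'' direction via a configuration space--test map scheme whose solvability is guaranteed by Theorem~\ref{thm:map-from-stiefel}. Suppose $\F = \{F_1, \dots, F_m\}$ is $\FF$-dependency consistent with $(d-k)$-tuples in a finite set $P = \{p_1, \dots, p_N\} \subseteq \FF^k$, witnessed by a map $\phi \colon \F \to P$. A $k$-flat $T$ in $\FF^d$ can be described by choosing $n := d-k$ linear functionals whose common zero set (after an affine shift) is the direction space of $T$; encoding this data by an $\FF$-orthonormal $n$-frame plus $n$ real shift parameters leads us to a configuration space of the form $W_n(\FF^d)$ (times a bounded Euclidean factor, which I will absorb). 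The target $\FF^{d-1} \oplus \cdots \oplus \FF^{d-n}$ in Theorem~\ref{thm:map-from-stiefel} has total $\FF$-dimension $\sum_{i=1}^n (d-i) = nk + \binom{n}{2}$; I would match this against the number of constraints needed to force all $m$ sets to meet the candidate flat, exactly as in the $k=d-1$ arguments of \cite{mcginnis2023necessary,mcginnis2023complex} and building on the Stiefel-manifold approach outlined there and in \cite{chan2020borsuk,sadovek2024complex}.

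The key construction is the test map. Given a frame $(v_1, \dots, v_n) \in W_n(\FF^d)$ and shifts, one gets a candidate flat $T$; for each set $F_j$ one measures how far $F_j$ is from meeting $T$ by recording, for each functional $\ell_i$, the signed ``gap'' $g_i(F_j)$ between $\ell_i(F_j)$ (an interval in $\FF$, or rather its support data) and the prescribed value. Crucially, these gaps should be assembled \emph{using the point configuration} $\phi(F_j) \in P$ as weights, so that the $\Z_2^n$-equivariance (product antipodal action coming from replacing $v_i \mapsto -v_i$) is respected: flipping $v_i$ flips the sign of the $i$-th coordinate of the gap vector. The map then lands in $\FF^{d-1} \oplus \cdots \oplus \FF^{d-n}$ after a standard ``moment curve / order type'' reparametrization that converts the $m$-tuple of gaps into $nk + \binom{n}{2}$ coordinates --- this is where the combinatorial bound $|\F'| \le (k+1)(d-k)\dim_\R \FF + 1$ on subfamilies enters, via a Carath\'eodory/Steinitz-type argument reducing a witnessing affine dependence to one supported on at most that many sets. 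Applying Theorem~\ref{thm:map-from-stiefel} produces a frame (hence a flat $T_0$) and shifts at which the test map vanishes; I then argue that vanishing of the test map, combined with the dependency-consistency hypothesis applied to a minimal offending subfamily, forces $F_j \cap T_0 \neq \emptyset$ for every $j$, so $T_0$ is the desired $k$-transversal. Translating ``test map $=0$'' into ``every $F_j$ meets $T_0$'' is a separation/duality step: if some $F_j$ missed $T_0$, a separating hyperplane would yield an affine dependence among the $\phi$-images with no lift to the $q_F \in F$, contradicting dependency consistency.

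The main obstacle I anticipate is getting the bookkeeping of the test map exactly right so that: (i) it is genuinely continuous and defined on all of $W_n(\FF^d)$ (handling degenerate frames where the $n$ functionals fail to cut out a $k$-flat transverse to the sets, and handling unbounded shift parameters by a compactification or a priori bound coming from boundedness of the convex sets --- one may assume the $F_j$ compact by a limiting argument); (ii) it is honestly $\Z_2^n$-equivariant for the product antipodal action, which pins down how the $n$ functionals and their signs must be packaged; and (iii) the zero set of the test map is \emph{exactly} the set of transversal-realizing configurations, with no spurious zeros. The equivariance and the ``no spurious zeros'' conditions are in tension --- a more symmetric map is easier to analyze topologically but risks extra zeros --- and balancing them is the crux, just as it was in the $d-1$ case. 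Unifying $\FF = \R$ and $\FF = \C$ should be essentially formal once the $\R$ case is set up, since Theorem~\ref{thm:map-from-stiefel} is stated uniformly and the only change is replacing real gaps/dependencies by complex ones and $\dim_\R \FF$ tracking the doubling of real dimension; I would carry out the real case in full and then indicate the complex modifications.
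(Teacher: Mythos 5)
Your proposal follows the same broad strategy as the paper (a configuration space--test map scheme resolved by Theorem~\ref{thm:map-from-stiefel}, with the test map weighted by the $\phi$-images), but the three devices that actually make the scheme work are missing, and they are exactly the points you flag as ``obstacles'' without resolving them. First, the configuration space: you take $W_{d-k}(\FF^d)$ together with $d-k$ unbounded shift parameters and say you will ``absorb'' them, but Theorem~\ref{thm:map-from-stiefel} is stated for the Stiefel manifold alone, and the compactification of the shifts is not a routine afterthought. The paper sidesteps this entirely by placing the (compact) sets in the affine hyperplane $\FF^d + e_{d+1} \subseteq \FF^{d+1}$ and working with $W_{d-k}(\FF^{d+1})$: a frame spans $V$, the candidate $k$-flat is $V^\perp \cap (\FF^d + e_{d+1})$, and no shift parameters occur at all. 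Second, the test map: your ``signed gap'' is never defined, and the crucial gadget is absent, namely the point $p_{V,F}$ of $\proj_V(F)$ nearest to the origin. The paper's map sends $(v_1,\dots,v_{d-k})$ to $\bigl(\sum_{F}\langle v_i,p_{V,F}\rangle,\ \sum_{F}\langle v_i,p_{V,F}\rangle\phi(F)\bigr)_{i=1}^{d-k} \in (\FF^{k+1})^{d-k}$, which embeds equivariantly into the theorem's target since $k+1 \le d+1-i$ for all $i \le d-k$; no ``moment curve / order type reparametrization'' into $(d-k)k+\binom{d-k}{2}$ coordinates is needed, and it is unclear how such a reparametrization could be made $\Z_2^{d-k}$-equivariant while keeping control of its zeros.

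Third, and most substantively, your passage from a zero of the test map to a transversal is not the right logic, and your worry about ``no spurious zeros'' is misplaced: the zero set need not consist only of transversal-realizing frames. The paper argues by contradiction. Assuming no transversal exists, at a zero of the test map the subfamily $\G$ of sets with $0\notin\proj_V(F)$ is nonempty; the vanishing of the map says the origin lies in the convex hull of the points $\bigl(\langle v_i,p_{V,F}\rangle,\langle v_i,p_{V,F}\rangle\phi(F)\bigr)_i$ for $F\in\G$, and Carath\'eodory (this is where the bound $(k+1)(d-k)\dim_\R\FF+1$ enters, not a Steinitz reduction of your reparametrized map) produces nonnegative coefficients $a_F$ on a small subfamily $\F'$ giving the $d-k$ affine dependencies required by Definition~\ref{def:dependency-consistent-with-tuples}. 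Dependency consistency then supplies $q_F\in F$ and $r_F\ge 0$ with a nontrivial lifted dependence, and the contradiction is that $\sum_{F\in\F'} r_Fa_F\langle\proj_V(q_F),p_{V,F}\rangle=0$ while every nonzero term has strictly positive real part, precisely because $p_{V,F}$ is the nearest point of the convex set $\proj_V(F)$ to the origin and $\proj_V(q_F)$ lies in that set. Your sketch (``a separating hyperplane would yield an affine dependence among the $\phi$-images with no lift'') reverses this: the dependence among the $\phi$-images comes from the vanishing of the test map, not from a separation, and without the nearest-point positivity there is no mechanism to contradict the existence of the lift. As written, the proposal does not yet contain a proof of the ``only if'' direction.
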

\begin{proof} {If $\F$ has an $\FF$ $k$-transversal $T$, then for each $F\in \F$, we may choose points $q_F\in F\cap T$. Then, for any $\FF$ affine isomorphism $\psi:T\rightarrow \FF^k$, $\F$ is $\FF$-dependency consistent with $(d-k)$-tuples in $\{\psi(q_F)\}_{F\in \F}$. This is because the $q_F$ and the $\psi(q_F)$ satisfy the same affine dependencies.}

{For the ``only if'' direction}, let us assume that $\F$ does not admit a $k$-dimensional $\FF$-transversal.
We may also assume the sets of $\F$ are compact and we put them in a copy of $\FF^d$ lying in $\FF^d\! +\! e_{d+1}\!\! \subseteq\! \FF^{d+1}$, where $e_{d+1} \coloneqq (0, \dots, 0, 1) \in \FF^{d+1}$. Throughout the proof, we will assume the inner products to be with respect to the field $\FF$, unless explicitly stated otherwise. {In particular, for $\FF = \C$, this inner product is given by $\langle (z_1, \dots, z_n), (w_1, \dots, w_n) \rangle = \sum_{i=1}^n z_i \overline{w}_i$.}
For each $(d-k)$-orthonormal frame $(v_1,\dots,v_{d-k})\in W_{d-k}(\FF^{d+1})$, we set 
\[
    V \coloneqq \linspan_\FF\{v_1,\dots,v_{d-k}\} \subseteq \FF^{d+1}
\]
and denote by 
\[
    \proj_V \colon~ \FF^{d+1} \longrightarrow V,~ x \longmapsto \langle x, v_1 \rangle v_1 + \dots + \langle x, v_{d-k} \rangle v_{d-k}
\]
the orthogonal projection. For each $F\in \F$, let $p_{V,F}$ be the point of $\proj_V(F) \subseteq V$ closest to the origin. Such a point is unique since $\proj_V(F)$ is convex.

Let $P \subseteq \FF^k$ and $\phi \colon \F \to P$ be according to Definition \ref{def:dependency-consistent-with-tuples}.
Define a $\Z_2^{d-k}$-equivariant test map as
\begin{align*}
    W_{d-k}(\FF^{d+1}) &\longrightarrow (\FF^{k+1})^{d-k}\\
    (v_1, \dots, v_{d-k}) &\longmapsto \Big(\sum_{F\in \F} \langle v_i, p_{V,F} \rangle, \sum_{F\in \F} \langle v_i, p_{V,F} \rangle \phi(F)\Big)_{i=1}^{d-k}
\end{align*}
where the group has the product-antipodal action on both spaces.
By Theorem \ref{thm:map-from-stiefel}, there exists a $(d-k)$-frame $({u}_1, \dots, {u}_{d-k}) \in W_{d-k}(\FF^{d+1})$ that maps to the origin.

First, we observe that $e_{d+1}\! \notin {U\coloneqq \linspan_\FF\{u_1,\dots,u_{d-k}\}}$, since otherwise $\proj_{{U}}(F)\subseteq \FF^d \!+\! e_{d+1}$ and, in particular, $p_{{U},F}\!\in \FF^d\! +\! e_{d+1}$, for all $F \in \F$. The latter would imply
\[
    0 \neq\sum_{F\in \F} p_{{U},F} = \sum_{i=1}^{d-k} \sum_{F\in \F} \overline{\langle {u}_i, p_{{U},F} \rangle} {u}_i,
\]
which contradicts the fact that $\sum_{F\in \F} \overline{\langle {u}_i, p_{{U},F} \rangle} = 0$, for every $i=1, \dots, d-k$.

Next, we may assume that there are sets $F\in \F$ for which $0\notin \proj_{{U}}(F)$, for otherwise ${U}^\perp\! \cap (\FF^d\! +\! e_{d+1})$ would be a $k$-transversal to $\F$, contradicting our initial assumption. Indeed, $0\in \proj_{{U}}(F)$ is equivalent to ${U}^\perp\cap F \neq \emptyset$, and $e_{d+1}\notin {U}$ implies that ${U}^\perp \cap (\FF^d + e_{d+1})$ is a $k$-flat in $\FF^d + e_{d+1}$.

Therefore, the subfamily $\G \coloneqq \{F \in \F \colon~ 0\notin \proj_{{U}}(F)\}$ is non-empty and for each $F \in \G$ there are values $\langle {u}_i, p_{{U},F} \rangle$ which are nonzero. We have that the origin in $\FF^{(k+1)(d-k)}$ is in the convex hull of the set of points
\[
        \Big\{\Big(\langle {u}_i, p_{{U},F} \rangle, \langle {u}_i, p_{{U},F} \rangle \phi(F)\Big)_{i=1}^{d-k} \in \FF^{(k+1)(d-k)} \colon~ F \in \G\Big\}.
\]
Thus, by Carath\'eodory's theorem, there exists a subfamily $\F' \subseteq \G$ of size at most $(k+1)(d-k)\dim_\R\FF+1$ and positive real numbers $a_F$ that sum up to one such that the affine dependencies
\begin{equation*}
    \sum_{F\in \F'} a_F \langle u_i, p_{{U},F} \rangle = 0  \,\, {\rm and} \,\, \sum_{F\in \F'} a_F \langle {u}_i, p_{{U},F} \rangle \phi(F) = 0, \hspace{7mm} \textrm{for }i=1, \dots, d-k,
\end{equation*}
are not all trivial.
Since $\F$ is $\FF$-dependency consistent with $(d-k)$-tuples in $P$, there exist points $q_F\in F$ and real numbers $r_F \ge 0$, for $F\in \F'$, such that the affine dependencies 
\begin{equation*}
    \sum_{F\in \F'} r_Fa_F \langle {u}_i, p_{{U},F} \rangle = 0  \,\, {\rm and} \,\, \sum_{F\in \F'} r_Fa_F \langle {u}_i, p_{{U},F} \rangle q_F=0, \hspace{7mm} \textrm{for }i=1, \dots, d-k,
\end{equation*}
are not all trivial. After projecting each $q_F$ onto ${U}$, we get that
\begin{align*}
    \sum_{F\in \F'} r_Fa_F\langle  \proj_{{U}}(q_F), p_{{{U}},F} \rangle &= \sum_{F\in \F'} r_Fa_F\sum_{i=1}^{d-k} \langle \proj_{{U}}(q_F), {u}_i \rangle \langle {u}_i, p_{{{U}},F} \rangle \notag\\
    &= \sum_{i=1}^{d-k} \left\langle \sum_{F\in \F'} r_Fa_F \langle {u}_i, p_{{{U}},F} \rangle \proj_{{U}}(q_F), {u}_i \right\rangle = 0.
\end{align*}
However, for all $F\in \F'$, we have $\mathrm{Re}\langle  \proj_{{U}}(q_F), p_{{{U}},F} \rangle = \langle \proj_{{U}}(q_F), p_{{{U}},F} \rangle_\R > 0$. This is because $0 \neq p_{{{U}},F} \in \proj_{{U}}(F)$ is the closest point to the origin of the convex set $\proj_{{U}}(F)$, which also contains $\proj_{{U}}(q_F)$. 
Therefore, $\proj_{{U}}(q_F)$ lies on the positive side of the real affine hyperplane
\[
    (p_{{{U}},F})^{\perp_\R} + p_{{{U}},F} \subseteq {{U}} \cong (\R^{\dim_\R \FF})^{d-k},
\]
and consequently $\langle \proj_{{U}}(q_F), p_{{{U}},F} \rangle_\R > 0$. See Figure \ref{fig:proj-of-F} for an illustration. This leads to a contradiction and completes the proof.
\end{proof}

\begin{figure}[h]
  \centering
  \includegraphics[width=0.4\textwidth]{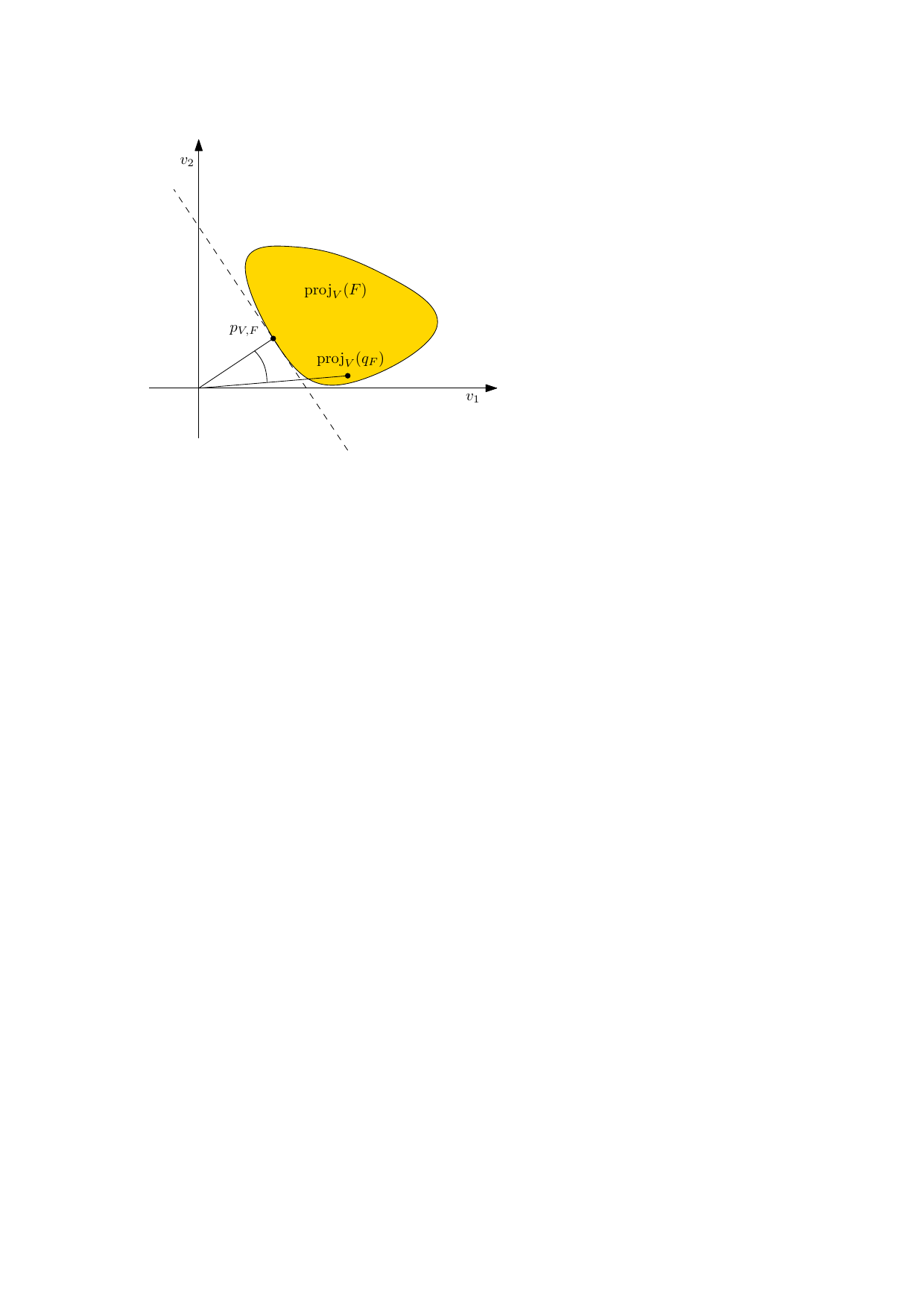}
  \caption{The point $\proj_{{U}}(q_F)$ lies above the dashed line $(p_{{{U}},F})^{\perp_\R}\!+\!p_{{{U}},F}$.}
  \label{fig:proj-of-F}
\end{figure}

\section{Another proof of the central transversal theorem}\label{sec:application}

As an application of Theorem \ref{thm:main-F}, we provide another proof of the \textit{central transversal theorem} due to \v{Z}ivaljevi\'c \& Vre\'cica \cite{zivaljevic1990extension} and Dol'nikov \cite{Dolnikov:1992ut}. We attribute this observation to Andreas Holmsen who shared this application of the main theorem to us via personal communication. {To this end, we first reprove Dol'nikov's more general result on transversals (and extend it to the complex case)}, which is the main application of Theorem \ref{thm:main-F}.

\begin{lemma}[{Dol'nikov \cite{dol1993transversals} for $\FF = \R$}]\label{lem:center transversal}
    Let $\F_0,\F_1,\dots,\F_k$ be finite families of convex sets in $\mathbb{F}^d$ such that for each $j=0, \dots, k$, every $\dim_\R(\FF)(d-k)+1$ or fewer sets in $\F_j$ have a nonempty intersection. Then the family $\F_0 \cup \dots \cup \F_k$ has an $\FF$ $k$-transversal.
\end{lemma}
\begin{proof}
    Let $P=\{p_0,\dots,p_k\}\subseteq \mathbb{F}^k$ be a set of points which are $\FF$ affinely independent and $\F = \F_0 \cup \dots \cup \F_k$. Define the map $\phi: \F \to \mathbb{F}^k$ by $F \mapsto p_j$ for each $F \in \F_j$. If there are multiple options for $j$, we choose one arbitrarily. We will show that $\F$ is $\FF$-dependency consistent with $(d-k)$-tuples in $P$ under the map $\phi$, and hence the result follows by Theorem \ref{thm:main-F}.

    Let $\F'\subseteq \F$ be a subfamily of size at most $\dim_\R(\FF)(k+1)(d-k)+1$, and let
    \[
        \sum_{F \in \F'} a^{(i)}_F = 0 \in \FF, \sum_{F \in \F'} a^{(i)}_F \phi(F) = 0 \in \FF^k, \hspace{11mm} \textrm{for }i=1, \dots, d-k,
    \]
    be $\FF$ affine dependencies, which are not all trivial. Since the points of $P$ are in general position, we have
    \[
    \sum_{F\in \F' \cap \phi^{-1}(p_j)}a^{(i)}_F = 0 \in \FF, \hspace{11mm} \text{for each } i=1, \dots, d-k,~ \text{and each }~ j=0, \dots, k.
    \]
    In particular, we have that the set of points 
    \[
        \{(a^{(1)}_F,\dots,a^{(d-k)}_F) \colon F\in \F' \cap \phi^{-1}(p_j)\}\subseteq \F^{d-k}, \hspace{11mm} \text{for each } ~ j=0, \dots, k,
    \]
    contains the origin in its convex hull. Let $0\leq j_0\leq k$ be such that the corresponding set above is not a singleton consisting only of the origin in $\F^{d-k}$. By Carath\'eodory's theorem, there are sets $F_1,\dots,F_s \in \F' \cap \phi^{-1}(p_{j_0})$ and positive real numbers $r_1,\dots,r_s$, where $1 \le s\leq \dim_\R(\FF)(d-k)+1$, such that 
    \[
        \sum_{i=1}^s r_i (a^{(1)}_{F_i},\dots,a^{(d-k)}_{F_i}) = 0 \in \FF^{d-k}
    \]
    and none of the points $(a^{(1)}_{F_i},\dots,a^{(d-k)}_{F_i})$ are the origin in $\FF^{d-k}$.

    Now, from the assumption of the lemma, there exists a point $q\in  F_1 \cap \dots \cap F_s$. For $F\in \F'$, we define a point $q_F \in F$ and a real number $r_F \ge 0$ as 
    \[
    q_F \coloneqq \begin{cases} q & \textrm{if } F=F_i,\\
    \text{any point in}~F & \textrm{otherwise}\end{cases} \hspace{5mm} \text{and} \hspace{5mm}
    r_F \coloneqq \begin{cases} r_i & \textrm{if } F=F_i,\\
    0 & \textrm{otherwise}.\end{cases}
    \]
    Then we have the following affine dependencies
    \[
        \sum_{F \in \F'} r_Fa^{(i)}_F = 0 \in \FF, \sum_{F \in \F'} r_Fa^{(i)}_F q_F = 0 \in \FF^k, \hspace{11mm} \textrm{for }i=1, \dots, d-k,
    \]
    which are not all trivial. Thus, we conclude that $\F$ is $\F$-dependency consistent with $(d-k)$-tuples in $P$ under the map $\phi$, which finishes the proof of the lemma.
\end{proof}

We can now prove the central transversal theorem. We state and prove it in its discrete form \cite[Chapter 1.4]{matousek2002lectures}.

\begin{theorem}[\v{Z}ivaljevi\'c-Vre\'cica \cite{zivaljevic1990extension}, Dol'nikov \cite{Dolnikov:1992ut}] \label{thm:central-transversal}
    Let $A_0,\dots,A_k$ be finite point sets in $\mathbb{R}^d$. Then there exists a $k$-dimensional affine subspace $T \subseteq \R^d$ such that every closed halfspace $H \subseteq \R^d$ that contains $T$ satisfies
    \begin{equation*}
        |H \cap A_j| \ge \frac{1}{d-k+1}|A_j|, \hspace{5mm} \text{for each}~ j=0, \dots, k.
    \end{equation*}
\end{theorem}
\begin{proof}
    For each $j=0, \dots, k$, let $\F_j$ be the family of convex sets given by the convex hulls of subsets of $A_j$ of size greater than $\frac{d-k}{d-k+1}|A_j|$. By a simple counting argument, every $d-k+1$ of fewer sets in $\F_j$ have a nonempty intersection. (In fact, the intersection contains a point from $A_j$.) By Lemma \ref{lem:center transversal}, there is a $k$-transversal $T$ to $\F_0 \cup \dots \cup \F_k$.  If a halfspace containing $T$ contains fewer than $\frac{1}{d-k+1}|A_j|$ points of $A_j$, then its complement contains more than $\frac{d-k}{d-k+1}|A_j|$ points of $A_j$, contradicting the fact that $T$ is a transversal to $\F_j$.
\end{proof}

{\v{Z}ivaljevi\'c and Vre\'cica \cite{zivaljevic1990extension} originally proved the central transversal theorem (CTT) by first establishing an appropriate result about the zeros of sections of certain bundles over the Grassmannian. Later Manta and Sober\'on \cite{manta2024generalizations} proved CTT by directly employing the real case of Theorem \ref{thm:map-from-stiefel} on equivariant maps from Stiefel manifolds. The complex case of the latter was then used by Sadovek and Sober\'on \cite{sadovek2024complex} to obtain the complex analogue of CTT (see Theorem \ref{thm:cplx-central-transv} below). Our proof of the more general Lemma \ref{lem:center transversal} uses our main theorem, Theorem \ref{thm:main-F}, as a black box and hence indirectly follows by the same result on Stiefel manifolds (Theorem \ref{thm:map-from-stiefel}). This is in contrast with its original proof by Dol'nikov \cite{dol1993transversals}, which again followed by establishing a vanishing result on sections on a bundle over Grassmannian.}

{As mentioned, we provide the complex analogue of the central transversal theorem, which follows readily from Lemma \ref{lem:center transversal}. We again state it its discretized form.}

\begin{theorem}[Sadovek-Sober\'on \cite{sadovek2024complex}] \label{thm:cplx-central-transv}
    Let $A_0,\dots,A_k$ be finite point sets in $\C^d$.
    Then there exists a complex $k$-dimensional affine subspace $T \subseteq \C^d$ such that every closed real halfspace $H \subseteq \C^d$ that contains $T$ satisfies
    \begin{equation*}
        |H \cap A_j| \ge \frac{1}{2d-2k+1}|A_j|, \hspace{5mm} \text{for each}~ j=0, \dots, k.
    \end{equation*}
\end{theorem}

\section{Concluding remarks}

\begin{itemize}
    \item The condition in Definition \ref{def:dependency-consistent-with-tuples} does not seem to have a more combinatorial formulation as the condition in Definition \ref{def:SepConsistent}. However, this is likely necessary since in \cite{holmsen2004nohelly} it was shown that there is no Hadwiger theorem for 1-transversals in $\mathbb{R}^3$, and thus a condition based solely on the separation properties of a set of points in $\R$ would not suffice.

    \item A colorful generalization of the Goodman-Pollack-Wenger theorem was recently proven in \cite{cheong2024new}. We leave it as an open problem to determine if there is an analogous colorful generalization of Theorem \ref{thm:main-F}. 

    \item It would be interesting to see if Theorem \ref{thm:main-F} or the proof method can be applied to existing problems in geometric transversal theory. For example, it is known that if a family of pairwise disjoint unit balls in $\R^d$ with a linear ordering such that every $2d$ balls has a $1$-transversal consistent with the order, then the family has a $1$-transversal \cites{borcea2008Line,cheong2008Helly}. Furthermore, the constant $2d$ cannot be lowered to $2d-2$ and it is unknown if it can be lowered to $2d-1$ \cite{cheong2012lower}. When $k=1$ and $\FF = \R$, the constant in Theorem \ref{thm:main-F} is $2d-1$, so perhaps Theorem \ref{thm:main-F} or our proof method has some bearing on this problem.
\end{itemize}

\section{Acknowledgements}

We would like to thank the anonymous referees for their helpful suggestions to improve the exposition of the manuscript and Florian Frick, Andreas Holmsen, and Pablo Sober\'on for their feedback on a first version of this paper. We additionally thank Andreas Holmsen for sharing his observation that our main results imply the central transversal theorems.

\bibliographystyle{plain}
\bibliography{references.bib}

\end{document}